\documentclass [article] {amsart}
\usepackage{amsfonts,amsthm,amsbsy,amsmath,amssymb}
\usepackage{latexsym,enumerate,enumitem}
\usepackage{tikz-cd}
\usetikzlibrary{positioning}
\usepackage{xcolor}
\usepackage{mathrsfs}

\topmargin -6mm
\oddsidemargin 3mm\evensidemargin=\oddsidemargin %
\textwidth 15.6truecm%
\textheight 22.8truecm

\newtheorem{lemma}{Lemma}
\newtheorem{prop}{Proposition}
\newtheorem{theorem}{Theorem}

\newtheorem*{theorem*}{Theorem}
\newtheorem*{lemma*}{Lemma}
\newtheorem*{prop*}{Proposition}
\newtheorem*{corollary*}{Corollary}
\newtheorem*{remark*}{Remark} 
\newtheorem*{remarks*}{Remarks}

\def\R{{\mathbb R}}

\def\s{\sigma}

\def\M{\mathcal{M}}

\DeclareFontFamily{U}{mathx}{\hyphenchar\font45}
\DeclareFontShape{U}{mathx}{m}{n}{
	<5> <6> <7> <8> <9> <10>
	<10.95> <12> <14.4> <17.28> <20.74> <24.88>
	mathx10
}{}
\DeclareSymbolFont{mathx}{U}{mathx}{m}{n}
\DeclareFontSubstitution{U}{mathx}{m}{n}
\DeclareMathAccent{\widecheck}{0}{mathx}{"71}
\DeclareMathAccent{\wideparen}{0}{mathx}{"75}

\newcommand{\ry}{r{_{_{Y}} }}
\newcommand{\ryk}{r{_{_{Y_k}} }}

\begin{document} 
 \title[Multilinear Spherical Maximal Function]{Multilinear Spherical Maximal Function}
\author[Georgios Dosidis]{Georgios Dosidis}
\newcommand{\Addresses}{{
		\bigskip
		\footnotesize
		
		\textsc{Department of Mathematics, University of Missouri,
			Columbia MO 65203}\par\nopagebreak
		\textit{E-mail address:} \texttt{ganq8f@mail.missouri.edu}
		

}}
\maketitle
\begin{abstract} In dimensions $n\ge 2$ we obtain $L^{p_1}(\mathbb R^n) \times\dots\times L^{p_m}(\mathbb R^n)$ to $L^p(\mathbb R^n)$ boundedness for the multilinear spherical maximal function in the largest possible open set of indices  and we provide   counterexamples  that  indicate the optimality of our results. Moreover, we obtain  weak type   and Lorentz space estimates  as well as  counterexamples in the endpoint cases.

\end{abstract}
\section*{Introduction}
In this work we  study the $L^p$ boundedness of the $m$-linear analogue of the  spherical maximal function
\begin{equation}\label{MS}
\M (f_1,\dots,f_m) (x) := \sup_{t>0}\left| \int_{\mathbb S^{mn-1}}  \prod_{j=1}^{m} f_j(x-t y^j) d\s_{mn-1}(y^1,\dots,y^m)\right|,
\end{equation}
defined originally for Schwartz functions, where $d\s$ stands for the surface measure of $\mathbb S^{mn-1}$. The study of spherical means was initiated by Stein   \cite{S1976}, who obtained a bound for the linear spherical maximal function 
\begin{equation}\label{SS}
S (f) (x) := \sup_{t>0}\left| \int_{\mathbb S^{n-1}}f(x-t y) d\s_{n-1}(y)\right|
\end{equation}
from $L^p(\mathbb R^n) \to L^p(\mathbb R^n)$ when $n\geq 3$ and $p>\frac{n}{n-1}$ and showed that it is unbounded when $p\leq\frac{n}{n-1}$ and $n\geq 2$. The analogue of this result  in dimension $n=2$ was established by Bourgain in \cite{B1986}, who also obtained a restricted weak type estimate in \cite{B1985} in the case $n\geq 3$. Later, Seeger, Tao, and Wright in \cite{STW2003} proved that the restricted weak type estimate does not hold in dimension $n=2$. A number of other authors have also studied the spherical maximal function; see for instance \cite{C1985}, \cite{CM1979}, \cite{MSS1992}, and \cite{S1998}. Extensions of the spherical maximal function to different settings have also been established by several authors; for instance see \cite{C1979}, \cite{G1981}, \cite{DV1996}, and \cite{MSW2002}. 

The bi(sub)linear analogue of Stein's spherical maximal function was first introduced in by Geba, Greenleaf, Iosevich, Palsson, and Sawyer \cite{GGIPS2013} who obtained the first bounds for it but later improved bounds were provided  by \cite{BGHHO2018}, \cite{GHH2018}, \cite{HHY2019}, and \cite{JL2019}. A multilinear (non-maximal) version of this operator when all input functions lie in the same space $L^{p}(\R)$ was previously studied by Oberlin \cite{O1988}. The authors in \cite{BGHHO2018} provided an example that shows that the bilinear spherical maximal function is not bounded when $p\geq \frac{n}{2n-1}$. Earlier this year, Jeong and Lee in \cite{JL2019} proved that the bilinear maximal function is pointwise bounded by the product of the linear spherical maximal function and the Hardy-Littlewood maximal function, which helped them establish boundedness in the optimal open set of exponents, along with some endpoint estimates. Recently certain analogous bounds have been obtained by Anderson and Palsson   \cite{AP20191}   \cite{AP20192} concerning the discrete multilinear spherical maximal function. 

In this work we extend the results of Jeong and Lee in the multilinear setting and we adapt the counterexample of Barrionuevo, Grafakos, He, Honz\'ik, and Oliveira in \cite{BGHHO2018} to show that our results are sharp. We also provide a counterexample that addresses a question raised by Jeong and Lee in \cite{JL2019} regarding the validity of a strong type $L^1\times L^\infty \to L^1$ bound for the bilinear spherical maximal function.

\section*{Main Results}
Let   $n\geq 2$, $1\leq p_1,\dots ,p_m  \leq \infty$, and $\sum\limits_{j=1}^m \frac{1}{p_j}=\frac{1}{p}$. For the largest possible open set of exponents we prove strong type bounds  
\begin{equation}\label{ST} 
\|\M(f_1,\dots,f_m)\|_{L^p}\lesssim \prod_{j=1}^m \|f_j\|_{L^{p_j}}
\end{equation}
and at the endpoints of this open region we prove weak type estimates 
\begin{equation}\label{WT} \|\M(f_1,\dots,f_m)\|_{L^{p,\infty}}\lesssim \prod_{j=1}^m \|f_j\|_{L^{p_j}}
\end{equation}
 or Lorentz  space  estimates (when $n\geq 3$) of the form 
 \begin{equation}\label{RWT} \|\M(f_1,\dots,f_m)\|_{L^{p,\infty}}\lesssim \Big( \prod_{j\neq k} \|f_j\|_{L^{p_j}}\Big)\|f_k\|_{L^{p_k,1}}, \quad k=1,\dots,m.
 \end{equation}
 Here, as well as in what follows, we write   $A\lesssim B$ when $A\leq C B$ for some constant $C$ independent of $A$ and $B$.
 
 We visualize the region of boundedness as a convex polytope with  $2^m+m-1$ vertices contained in  
 the cube $[0,1]^m$  with coordinates $(\frac{1}{p_1},\dots,\frac{1}{p_m})$ (see Figure \ref{F1}). 
 The closure of this region, which we denote by $\mathcal H$, is 
 obtained as the intersection of $[0,1]^m$ with the half space $\sum_{j=1}^m \frac{1}{p_j} 
\le  \frac{mn-1}{n}$.  
Strong type boundedness at a point $  (\frac{1}{p_1},\dots,\frac{1}{p_m})$ means that (\ref{ST}) is satisfied; similarly for weak type and Lorentz space bounds.  To better describe this region, 
we define $v_j = (1,\dots, 1, \frac{n-1}{n}, 1, \dots, 1)$ for $j=1,\dots, m$ and $V=conv\{v_1,\dots,v_m\} $, the 
closed convex hull of the $v_j$'s.     We also denote by $\partial R$ the boundary of a region $R$ in $\mathbb R^m$.

\begin{theorem} \label{TM1} Let $n\geq 2$, $1\le  p_1,\dots ,p_m \le  \infty$ and $\sum_{j=1}^m \frac{1}{p_j}=\frac{1}{p}$.  Then the multilinear spherical maximal function $\M$ in (\ref{MS}) satisfies the following estimates: \\
\noindent {Case I:} If   $1<p_j<\infty$ for all $j\in \{1,\dots , m\}$, then $\M$
  is bounded from $L^{p_1}(\mathbb R^n)\times\dots\times L^{p_m}(\mathbb R^n)$ to $L^p(\mathbb R^n)$ if and only if $p>\frac{n}{mn-1}$.  \\
\noindent {Case II:} When   $\frac{1}{p_j} \in \{0,1\}$ for some $j$ and $p> \frac{n}{mn-1}$  we have: 
	\begin{enumerate}[label=\alph*) ]
		\item[(a)]	At the vertex $(0,\dots, 0)$   the strong type estimate (\ref{ST}) holds. 
		\item[(b)]   At the $2^m - 2$ vertices of $[0,1]^m$ except $(0,\dots, 0)$ and $(1,\dots, 1)$   the weak type estimate (\ref{WT}) holds.
	\end{enumerate}	
	Let $1\le k<m$. 
At each   open $k$-dimensional face  of  $\partial [0,1]^m \cap  \mathcal H$, described as the  set of all  points 
$(\frac{1}{p_1},\dots,\frac{1}{p_m})$ on the boundary of $[0,1]^m \cap  \mathcal H$ with exactly $m-k$  fixed coordinates in $\{0,1\}$, we  have:
     \begin{enumerate}[label=\alph*) ]
	 	\addtocounter{enumi}{2}
	 	\item[(c)] 	If all $m-k$ fixed coordinates are $0$, then  the strong type estimate (\ref{ST}) holds for all $n\geq 2$. 
	 	\item[(d)]  If at least one fixed coordinate equals $1$, then   the strong type estimate (\ref{ST}) holds when $n\geq 3$. 
	 \end{enumerate}
\noindent {Case III:} When $p=\frac{n}{mn-1} $  (critical exponent), then we have when $n\ge 3$: 
 	 \begin{enumerate}[label=\alph*) ]
 	 	\addtocounter{enumi}{4}
 	 	\item[(e)]  On the boundary of $V$  we have the Lorentz space estimate (\ref{RWT}).
 		\item[(f)]  On the interior of $V$ we have  the weak type estimate (\ref{WT}). More generally, we have  
 		\begin{equation}\label{LT}
		 \|\M(f_1,\dots,f_m)\|_{L^{\frac{n}{mn-1},\infty}}\lesssim \prod_{j=1}^m \|f_j\|_{L^{p_j,s_j}},
 		\end{equation}
 	for all $s_1,\dots, s_m>0$ such that $\sum\limits_{j=1}^m \frac{1}{s_j} = \frac{mn-1}{n}.$
 	\end{enumerate}

\end{theorem}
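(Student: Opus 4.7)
The proof hinges on a pointwise domination extending the bilinear estimate of Jeong--Lee:
\[
\M(f_1,\dots,f_m)(x) \;\lesssim\; S(f_k)(x) \prod_{j \ne k} M(f_j)(x), \qquad k = 1,\dots,m,
\]
where $S$ is the linear spherical maximal function from \eqref{SS} and $M$ is the Hardy--Littlewood maximal operator. I would establish this by iterated polar decomposition of $\mathbb{S}^{mn-1}$: peel off the variables $y^j$, $j\ne k$, one at a time by writing $y^j=r\omega$ with $r\in[0,1]$ and $\omega\in\mathbb{S}^{n-1}$, rescaling the remaining coordinates to a sphere of radius $\sqrt{1-r^2}$. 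At each step the resulting average of $f_j$ against the polar Jacobian $r^{n-1}(1-r^2)^{\alpha}\,dr\,d\sigma_{n-1}$ becomes, under the substitution $y=tr\omega$, a weighted ball-average bounded pointwise by $M(f_j)(x)$ uniformly in $t>0$, while the innermost spherical integral of $f_k$ is always bounded by $S(f_k)(x)$. Induction on $m$ closes the pointwise bound.

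Case~I and the $L^\infty$-face statements (a), (c) of Case~II then follow immediately: applying Stein's theorem ($S:L^q\to L^q$ for $q>\tfrac{n}{n-1}$, $n\ge 2$) and $M:L^q\to L^q$ for $q>1$, and choosing in the pointwise bound the index $k$ to be any $L^\infty$-entry whenever one is available, gives \eqref{ST} in an open neighborhood of each vertex $v_k$ and on every face where some $p_j=\infty$ is fixed. The trivial bound $\|\M(f_1,\dots,f_m)\|_\infty \lesssim \prod\|f_j\|_\infty$ covers $(0,\dots,0)$, and multilinear interpolation among these regions sweeps out the entire open set $\{(1/p_1,\dots,1/p_m):1<p_j<\infty,\ \sum\tfrac{1}{p_j}<\tfrac{mn-1}{n}\}$. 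Statement (b) follows similarly by taking $k$ to be one of the $L^\infty$-entries and applying weak-type H\"older with $M:L^1\to L^{1,\infty}$ on the $L^1$-entries.

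For Case~III with $n\ge 3$ I would invoke Bourgain's restricted weak-type bound $S:L^{n/(n-1),1}\to L^{n/(n-1),\infty}$. Inserted into the pointwise domination with $k$ chosen to match the $L^{n/(n-1)}$-entry of the vertex $v_k$, together with $M:L^1\to L^{1,\infty}$ on the other entries, Lorentz--H\"older yields \eqref{RWT} at each of the $m$ vertices $v_k$. Multilinear real interpolation in Lorentz spaces among these $m$ endpoints then promotes the estimate to \eqref{LT} for every $(s_1,\dots,s_m)$ with $\sum\tfrac{1}{s_j}=\tfrac{mn-1}{n}$; specializing $s_j=p_j$ at interior points of $V$ gives \eqref{WT} and proves (f), and analogous interpolation along the boundary faces of $V$ proves (e). Statement (d) of Case~II is obtained by real interpolation between (b) at an adjacent mixed vertex of $\partial[0,1]^m$ and (e) at an adjacent vertex of $V$; this is precisely where $n\ge 3$ is needed, since for $n=2$ the endpoint (e) fails by Seeger--Tao--Wright.

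Optimality of $p>\tfrac{n}{mn-1}$ and the impossibility of strong type on the critical hyperplane are established by an $m$-linear Knapp-type example adapted from \cite{BGHHO2018}: take each $f_j$ the characteristic function of a thin tube, scaled so that on a sizeable cap of $\mathbb{S}^{mn-1}$ all the shifts $x-ty^j$ simultaneously lie in the support of the corresponding $f_j$; optimizing the tube dimensions forces $\|\M(f_1,\dots,f_m)\|_{L^p}/\prod\|f_j\|_{L^{p_j}}\to\infty$ whenever $\sum\tfrac{1}{p_j}>\tfrac{mn-1}{n}$, and a refined variant addresses the $L^1\times L^\infty\to L^1$ question raised in \cite{JL2019}. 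The technical heart of the argument is the pointwise domination; the main obstacle there is arranging the iterated parameterization so that every Jacobian factor is integrable and every ball-average is controlled uniformly in $t$. The remaining difficulty lies in careful Lorentz-space bookkeeping during the interpolation steps and in tuning the Knapp example to saturate the critical constraint.
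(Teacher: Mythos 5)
Your proposal follows essentially the same route as the paper: the pointwise domination $\M(f_1,\dots,f_m)\lesssim Sf_k\prod_{j\ne k}Mf_j$ obtained by slicing $\mathbb{S}^{mn-1}$ (the paper does this in one step via its decomposition lemma with $k=m-1$ rather than by iterated polar coordinates), H\"older's inequality combined with Stein's theorem and, for $n\ge 3$, Bourgain's restricted weak type endpoint, followed by multilinear (Lorentz-space) interpolation, with sharpness examples adapted from \cite{BGHHO2018}. The remaining differences are cosmetic rather than substantive: the paper first linearizes $\M$ through a measurable stopping function $\tau(x)$ to justify the interpolation of a sublinear operator (a technicality you should not omit), and its counterexample uses the power-log functions $|y|^{-n/p_j}(\log\frac{1}{|y|})^{-m/p_j}\chi_{|y|\le\nu_j}$ evaluated at a distant point rather than a Knapp-type tube family, although an indicator-function cap computation of the kind you sketch also reaches the critical exponent $p=\frac{n}{mn-1}$.
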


\begin{remarks*}  
{\bf 1.}   Using a well-known theorem of Stein and Str\"omberg \cite{SS1983}, we can see that in the case of the largest open set, and the endpoint estimates (a) and (c) above, the implicit constant can be taken to be independent of the dimension $n$.
		
\noindent {\bf 2.} As was noted in \cite{JL2019}, the method used in the proof of Theorem \ref{TM1} also 
yields  bounds for the stronger multi(sub)linear operator 
		\begin{equation} \label{SMF} \mathscr{M} (f_1,\dots,f_m) (x) := \sup_{t_1,\dots,t_m>0}\left| \int_{\mathbb S^{mn-1}}  \prod_{j=1}^{m} f_j(x-t_j y^j) d\s_{mn-1}(y^1,\dots,y^m)\right|
		\end{equation}
		in the same ranges of $p_j$'s as $\M$. Also, trivially, the counterexamples provided for the unboundedness of $\M$ also work for $\mathscr{M}$.

\noindent {\bf 3.}   When $n=1$ and $m\geq 3$, estimates in the case $L^{p}(\R)\times L^\infty(\R)\times\cdots\times L^\infty (\R) \to L^{p}(\R)$ for $p>\frac{m}{m-1}$ follow from the classical theorem of Rubio de Francia in \cite{R1986}. However the 
		optimal results in the case $n=1$   remain  open.
\end{remarks*}

As an example we graph the area of boundedness for the trilinear spherical maximal function.

\newcommand{\Depth}{3.5}
\newcommand{\Height}{3.5}
\newcommand{\Width}{3.5}
\begin{figure}[h]
\begin{center}
	\begin{tikzpicture}
	\coordinate (O) at (0,0,0);
	\coordinate (A) at (0,\Width,0);
	\coordinate (B) at (0,\Width,\Height);
	\coordinate (C) at (0,0,\Height);
	\coordinate (D) at (\Depth,0,0);
	\coordinate (E) at (\Depth,\Width,0);
	\coordinate (F) at (\Depth,\Width,\Height);
	\coordinate (G) at (\Depth,0,\Height);
	\coordinate (v1) at (\Depth,\Height,0.7*\Width);
	\coordinate (v3) at (\Depth,0.7*\Height,\Width);
	\coordinate (v2) at (0.7*\Depth,\Height,\Width);
	\coordinate (Z) at (0,1.3*\Width,0);
	\coordinate (Y) at (1.3*\Depth,0,0);
	\coordinate (X) at (0,0,1.3*\Height);
	\draw[white,fill=yellow!35] (O) -- (C) -- (G) -- (D) -- cycle;
	\draw[white,fill=yellow!20] (O) -- (A) -- (E) -- (D) -- cycle;
	\draw[white,fill=yellow!20] (O) -- (A) -- (B) -- (C) -- cycle;
	\draw[white,fill=blue!15,opacity=0.4] (D) -- (E) -- (v1) -- (v3) -- (G) -- cycle;
	\draw[white,fill=blue!15,opacity=0.4] (C) -- (B) -- (v2)-- (v3) -- (G) -- cycle;
	\draw[white,fill=blue!15,opacity=0.4] (A) -- (B) -- (v2)-- (v1)  -- (E) -- cycle;
	\draw[red, fill=red!30,opacity=0.8] (v1) -- (v2) -- (v3) -- cycle; 
	\draw[blue,densely dotted] (O) -- (C);
	\draw[blue,densely dotted] (O) -- (D);
	\draw[blue,densely dotted] (O) -- (A);
	\draw[blue] (B) -- (A);
	\draw[blue] (B) -- (C);
	\draw[blue] (C) -- (G);
	\draw[blue] (E) -- (A);
	\draw[blue] (E) -- (D);
	\draw[blue] (G) -- (D);
	\draw[blue] (G) -- (v3);
	\draw[blue] (E) -- (v1);
	\draw[blue] (B) -- (v2);
	
	\node [above=1mm of v2] {$v_2$};
	\node [right=2mm of v3] {$v_3$};

	\draw[white,fill=white,opacity=0.2] (v2) -- (F) -- (v3) -- cycle;
	\draw[white,fill=white,opacity=0.2] (v2) -- (F) -- (v1) -- cycle;
	\draw[white,fill=white,opacity=0.2] (v1) -- (F) -- (v3) -- cycle;
	\draw[red] (F) -- (v1);
	\draw[red] (F) -- (v2);
	\draw[red] (F) -- (v3);
	\node [right=2mm of v1]  {$v_1=(\frac{n-1}{n} , 1, 1)$};
	\node [above=1mm of v2] {$v_2$};
	\node [right=2mm of v3] {$v_3$};
	
	\foreach \rr in {v1,v2,v3} {\filldraw[red](\rr) circle(1.5pt);}
	\foreach \rr in {O, A, B, C, D, E, G} {\filldraw[blue](\rr) circle(1.2pt);}
	\node [above right =1mm of O]  {O};
	\node [above right =1mm of A]  {C};
	\node [above=1mm of B]  {F};
	\node [above=1mm of E]  {E};
	\node [below=1mm of G]  {D};
	\node [below=1mm of D]  {B};
	\node [below right=1mm of C]  {A};
	
	
	\node [above left =1mm of X]  {$1/p_1$};
	\node [above=1mm of Y] {$1/p_2$};
	\node [left=1mm of Z] {$1/p_3$};
	\draw[->,black,densely dotted] (C) -- (X);
	\draw[->,black,densely dotted] (D) -- (Y);
	\draw[->,black,densely dotted] (A) -- (Z);

	\end{tikzpicture}
\caption[Figure 1]{$L^{p_1}\times L^{p_2}\times L^{p_3}\to L^p$ boundedness of the trilinear spherical maximal operator ($n\geq 2$). }\label{F1}
\end{center}
\end{figure}
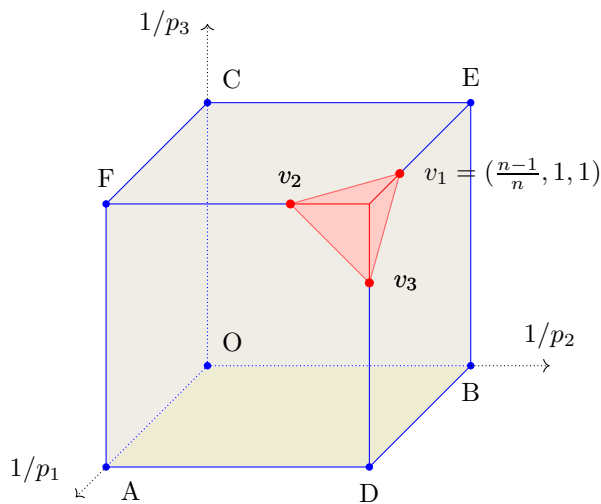
The counterexamples claimed in Theorem~\ref{TM1}  are contained in Proposition~\ref{P1}. This is obtained by  
an adaptation of the examples   in \cite[Proposition 7]{BGHHO2018}. 
\begin{prop} \label{P1} Let $1\leq p_1,\dots ,p_m \leq \infty$ and $\sum\limits_{j=1}^m \frac{1}{p_j}=\frac{1}{p}$. The multilinear spherical maximal function $\M$ is unbounded from $L^{p_1}\times\dots\times L^{p_m}$ to $L^p$ when $p\leq \frac{n}{mn-1}$ and $n\geq 2$.
\end{prop}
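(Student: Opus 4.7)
The plan is to adapt the bilinear counterexample of \cite[Proposition 7]{BGHHO2018} using the simplest possible test functions: take $f_1=\cdots=f_m=\chi_{B(0,1)}$, each of which lies in every $L^{p_j}$ with $\|\chi_{B(0,1)}\|_{L^{p_j}}=|B(0,1)|^{1/p_j}$ finite. The work then reduces to showing that $\M(f_1,\dots,f_m)\notin L^p(\mathbb{R}^n)$ throughout the claimed range.

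The central step is a pointwise lower bound for $\M(f_1,\dots,f_m)(x)$ when $|x|$ is large. I would choose the scale $t=|x|\sqrt{m}$, which is the unique value that places the diagonal vector $(x/t,\dots,x/t)\in\mathbb{R}^{mn}$ on $\mathbb{S}^{mn-1}$. Taking this $t$ in the supremum gives
\[
\M(f_1,\dots,f_m)(x)\ge \sigma\bigl(\{y\in\mathbb{S}^{mn-1}:|y^j-x/t|<1/t\text{ for all }j=1,\dots,m\}\bigr).
\]
Any $y\in\mathbb{S}^{mn-1}$ whose Euclidean distance in $\mathbb{R}^{mn}$ from $(x/t,\dots,x/t)$ is at most $1/t$ satisfies each individual constraint, so the set on the right contains a spherical cap of radius $1/t$ around a point of $\mathbb{S}^{mn-1}$ and therefore has surface measure $\gtrsim (1/t)^{mn-1}$. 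Consequently $\M(f_1,\dots,f_m)(x)\gtrsim |x|^{-(mn-1)}$ for $|x|\ge R_0$, with $R_0$ depending only on $m$.

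Raising to the $p$-th power and integrating in polar coordinates,
\[
\|\M(f_1,\dots,f_m)\|_{L^p}^p\;\gtrsim\;\int_{R_0}^{\infty} r^{-(mn-1)p+n-1}\,dr,
\]
which diverges precisely when $(mn-1)p\le n$, i.e.\ when $p\le \tfrac{n}{mn-1}$; at the critical endpoint this is the logarithmically divergent integral $\int dr/r$, and below the endpoint it diverges polynomially. Since $\prod_j\|f_j\|_{L^{p_j}}$ is finite, this contradicts \eqref{ST} throughout the stated range. The only nontrivial ingredient is the spherical cap estimate, which reduces to the standard fact that a Euclidean ball of radius $r$ centered at a point of $\mathbb{S}^{mn-1}$ meets the sphere in a set of surface measure $\sim r^{mn-1}$; everything else is routine.
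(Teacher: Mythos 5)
Your proof is correct, but it takes a genuinely different --- and considerably more elementary --- route than the paper. You use the single family $f_1=\cdots=f_m=\chi_{B(0,1)}$, pick the unique radius $t=\sqrt m\,|x|$ that puts the diagonal point $(x/t,\dots,x/t)$ on $\mathbb S^{mn-1}$, and bound the average from below by the measure of a spherical cap of radius $1/t$; the resulting estimate $\M(f_1,\dots,f_m)(x)\gtrsim |x|^{1-mn}$ for large $|x|$ fails to be in $L^p$ exactly when $(mn-1)p\le n$, which covers the critical exponent as well (the radial integral diverges logarithmically there). All the steps check out: the inclusion of the cap $\{|y-(x/t,\dots,x/t)|<1/t\}$ in the set where every factor equals $1$ follows from $\sum_j|y^j-x/t|^2<1/t^2$, and the cap measure $\sim t^{1-mn}$ is standard. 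The paper instead uses Stein-type singular inputs $f_j(y)=|y|^{-n/p_j}\big(\log\frac{1}{|y|}\big)^{-m/p_j}\chi_{|y|\le\nu_j}$ tailored to the exponents, together with the slicing Lemma~\ref{L1}, the law of cosines, and Lemma~\ref{L2}, to control the average from below. What that heavier construction buys is a stronger failure mode: for $p<\frac{n}{mn-1}$ the averaging operator at a single radius is already $+\infty$ at every large $|x|$, not merely bounded below by a function outside $L^p$. Your construction does not give that, but the Proposition does not ask for it; in exchange, your example is uniform in $(p_1,\dots,p_m)$ rather than adapted to each tuple, and, since the inputs are characteristic functions, it also rules out the restricted strong type estimate $L^{p_1,1}\times\cdots\times L^{p_m,1}\to L^p$ at the critical exponent. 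If one is fastidious about $\M$ being defined initially on Schwartz functions, replace $\chi_{B(0,1)}$ by a smooth minorant equal to $1$ on $B(0,1/2)$; the same cap argument goes through with $1/(2t)$ in place of $1/t$.
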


The following proposition provides a negative answer to a question posed  by Jeong and Lee in \cite{JL2019} regarding a strong type $L^1(\R^n)\times L^\infty(\R^n) \to L^1(\R^n)$ bound for the bilinear spherical maximal function. 

\begin{prop} \label{P2} Let $p_j \in \{1,\infty\}$ for all $j=1,\dots,m$. Then the strong type estimate $\|\M(f_1,\dots,f_m)\|_{L^p}$ $\lesssim \prod_{j=1}^m \|f_j\|_{L^{p_j}}$ holds if and only if $p_j=\infty$ for all $j=1,\dots,m$.
\end{prop}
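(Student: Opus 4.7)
The plan is to prove the two directions separately. The ``if'' direction follows immediately from $|\M(f_1,\dots,f_m)(x)|\le|\mathbb S^{mn-1}|\prod_j\|f_j\|_{L^\infty}$. The content of the proposition is the ``only if'' direction, for which I would exhibit an explicit counterexample. By the permutation symmetry of $\M$ in its inputs I may relabel so that $p_1=\cdots=p_k=1$ and $p_{k+1}=\cdots=p_m=\infty$ for some $1\le k\le m$, whence $p=1/k$. I would then test the purported inequality against $f_j=\chi_{B(0,1)}$ for $j\le k$ and $f_j\equiv 1$ for $j>k$, so that the right-hand side reduces to a finite constant depending only on $n$ and $k$.

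The core step is a pointwise lower bound of the form $\M(f_1,\dots,f_m)(x)\gtrsim|x|^{-\kappa}$ for $|x|$ large, where $\kappa=kn$ if $k<m$ and $\kappa=mn-1$ if $k=m$. The support condition $x-ty^j\in B(0,1)$ for $j\le k$ forces $y^j\in B(x/t,1/t)$, and the spherical constraint $|y|=1$ pins $t$ near $\sqrt{k}|x|$. For $k<m$ I would integrate out the free coordinates $(y^{k+1},\dots,y^m)$ via the Fubini-type identity
\[
\int_{\mathbb S^{mn-1}}F(y^{\le k})\,d\sigma=c_{m,n,k}\int_{B_1(\R^{kn})}F(y^{\le k})(1-|y^{\le k}|^2)^{\frac{(m-k)n-2}{2}}\,dy^{\le k},
\]
then pick $t$ slightly above $\sqrt{k}|x|$ so that the admissible $y^{\le k}$-region has $kn$-volume of order $t^{-kn}$ and the weight is bounded below; this yields $\kappa=kn$. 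For $k=m$ the Fubini reduction is unavailable, and instead $\kappa=mn-1$ comes directly from the surface measure of the spherical cap $\mathbb S^{mn-1}\cap\prod_{j=1}^m B(x/t,1/t)$, which is of size $(1/t)^{mn-1}$ at $t\approx\sqrt{m}|x|$.

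Raising this lower bound to the $p$-th power and integrating over $\{|x|\ge C\}$ then produces the required divergence. For $k<m$ the integrand is $|x|^{-n}$ and yields $\int r^{-1}\,dr=\infty$; for $k=m$ the integrand is $|x|^{-(mn-1)/m}$ and yields $\int r^{1/m-1}\,dr=\infty$ at infinity, since $(mn-1)/m<n$ whenever $m\ge 2$. Hence $\|\M(f_1,\dots,f_m)\|_{L^p}=\infty$ against the finite right-hand side, ruling out any strong type estimate. The most delicate point I expect is verifying the pointwise lower bound in the mildly degenerate regime $n=2$, $k=m-1$, where the Jacobian exponent $((m-k)n-2)/2$ vanishes and the optimum in $t$ is attained at the boundary of $B_1(\R^{kn})$ rather than at an interior critical point of $u^{kn}(1-u^2)^{((m-k)n-2)/2}$; this is a direct geometric check.
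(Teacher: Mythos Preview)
Your approach is correct and essentially the same as the paper's: identical test functions, the same slicing identity (the paper's Lemma~1 is exactly your Fubini formula), and the same lower bound $\M(f_1,\dots,f_m)(x)\gtrsim|x|^{-kn}$ for $k<m$ followed by the divergent integral $\int_{|x|\ge C}|x|^{-n}\,dx$. Two minor deviations: (i) the paper chooses $t=\sqrt{m}\,|x|$ rather than $t$ just above $\sqrt{k}\,|x|$, which places $|Y_k|\approx\sqrt{k/m}$ strictly inside the unit ball so the weight $(1-|Y_k|^2)^{((m-k)n-2)/2}$ is automatically bounded below---your ``degenerate'' case $n=2$, $k=m-1$ is then a non-issue (indeed the exponent is zero there, so the weight is identically $1$); and (ii) for $k=m$ the paper does not argue directly but simply cites its Proposition~1, whereas your spherical-cap estimate $\gtrsim|x|^{-(mn-1)}$ is a valid self-contained alternative.
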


The optimality of the  weak type and Lorentz space estimates of Theorem \ref{TM1} not covered in Proposition~\ref{P2} remains open.

\vspace{.1in}

\section*{Proof of Theorem \ref{TM1}}
The following decomposition lemma will be crucial in the sequel. This result can be found for instance in \cite[Appendix D.2]{GClassical}, \cite{BGHHO2018}, or \cite{HHY2019},  but we include a proof in the appendix for the sake of completeness. 
\begin{lemma} \label{L1} Let $1\leq k < m$ and $n\geq 2$. For a function $F(y^1,\dots, y^m)$ defined in $\R^{mn}$ with $y^j \in\R^n$, $j=1,\dots, m$ , we have 
	\begin{eqnarray}\label{CoV}  \int_{\mathbb S^{mn-1}} F(y^1,\dots, y^m) d\s_{mn-1}(y^1,\dots,y^m) =&\\
	 =\int_{B^{kn}} \int_{\ryk \mathbb S^{(m-k)n-1}}  F(y^1,\dots, y^m) &d\s^{\ryk}_{(m-k)n-1}(y^{k+1},\dots,y^m) \dfrac{dy^1\cdots dy^{k}}{\sqrt{1 - \sum_{j=1}^{k}|y^j|^2} }\nonumber,
	\end{eqnarray}
where $B^{kn} = B^{kn}(0,1)$ is the unit ball in $\R^{kn}$, $\ryk = \sqrt{1 - \sum_{j=1}^{k}|y^j|^2}$ and $d\s^{\ryk}_{(m-k)n-1}$ is the normalized surface measure on $\ryk \mathbb S^{(m-k)n-1}$.
\end{lemma}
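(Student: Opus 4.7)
The plan is to prove the identity by an explicit change of variables on $\mathbb S^{mn-1}$. Writing $Y_k=(y^1,\dots,y^k)\in\R^{kn}$ and $Y'_k=(y^{k+1},\dots,y^m)\in\R^{(m-k)n}$, the condition $|y|=1$ forces $|Y'_k|=r_{Y_k}:=\sqrt{1-|Y_k|^2}$. Accordingly, I will parametrize the full-measure open portion $\{|Y_k|<1\}$ of $\mathbb S^{mn-1}$ by the diffeomorphism
$$\Phi\colon B^{kn}\times\mathbb S^{(m-k)n-1}\longrightarrow\mathbb S^{mn-1},\qquad \Phi(Y_k,\omega)=(Y_k,\,r_{Y_k}\omega),$$
which is bijective onto its image with inverse $(Y_k,Y'_k)\mapsto(Y_k,Y'_k/r_{Y_k})$. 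The claim (\ref{CoV}) then reduces to computing the pullback $\Phi^{*}d\sigma_{mn-1}$ and converting the $\omega$-integration on $\mathbb S^{(m-k)n-1}$ into one on the rescaled sphere $r_{Y_k}\mathbb S^{(m-k)n-1}$.

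The main step is the Jacobian computation. Using the standard basis $\{e_i\}$ on $\R^{kn}$ and any orthonormal frame $\{v_\alpha\}$ of $T_\omega\mathbb S^{(m-k)n-1}\subset\omega^{\perp}$, one computes $\partial_{e_i}\Phi=(e_i,\,-Y_k^{i}\omega/r_{Y_k})$ and $\partial_{v_\alpha}\Phi=(0,\,r_{Y_k}v_\alpha)$. Because each $v_\alpha$ is orthogonal to $\omega$, the cross inner products in the Gram matrix of $D\Phi$ vanish and the matrix splits into a $kn\times kn$ block $I+Y_kY_k^{\mathsf T}/r_{Y_k}^{2}$ (determinant $1/r_{Y_k}^{2}$) and a $((m-k)n-1)\times((m-k)n-1)$ block $r_{Y_k}^{2}I$. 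Taking the square root of the product of the two determinants yields
$$\Phi^{*}d\sigma_{mn-1}\;=\;r_{Y_k}^{(m-k)n-2}\,dY_k\,d\sigma_{(m-k)n-1}(\omega).$$

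To put this in the form asserted in the lemma I will absorb a factor $r_{Y_k}^{(m-k)n-1}$ into $d\sigma_{(m-k)n-1}(\omega)$ via the radial scaling $\omega\mapsto r_{Y_k}\omega$, which transforms it into the surface measure $d\sigma^{r_{Y_k}}_{(m-k)n-1}$ on $r_{Y_k}\mathbb S^{(m-k)n-1}$. What remains of the Jacobian weight is exactly $r_{Y_k}^{-1}=1/\sqrt{1-\sum_{j=1}^{k}|y^j|^2}$, matching (\ref{CoV}) on the nose.

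No serious obstacle is expected; the only delicate point is the bookkeeping that separates the powers of $r_{Y_k}$ belonging to the rescaled surface measure from those forming the explicit weight in front. As a sanity check I would test the identity on $F\equiv 1$, where after polar coordinates in $Y_k$ it reduces to the standard Beta-function identity relating the surface areas of $\mathbb S^{mn-1}$, $\mathbb S^{kn-1}$, and $\mathbb S^{(m-k)n-1}$.
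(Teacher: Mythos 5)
Your argument is correct, but it is not the route the paper takes. You parametrize the (full-measure) portion $\{|Y_k|<1\}$ of $\mathbb S^{mn-1}$ by $\Phi(Y_k,\omega)=(Y_k,r_{Y_k}\omega)$ and compute the Gram determinant of $D\Phi$ directly; the block structure you exploit (cross terms vanishing because $v_\alpha\perp\omega$, the $kn\times kn$ block $I+Y_kY_k^{\mathsf T}/r_{Y_k}^2$ having determinant $1/r_{Y_k}^2$ by the matrix determinant lemma, the other block contributing $r_{Y_k}^{2((m-k)n-1)}$) is right, and the resulting density $r_{Y_k}^{(m-k)n-2}$ splits as $r_{Y_k}^{-1}\cdot r_{Y_k}^{(m-k)n-1}$ exactly as needed to produce the dilated surface measure $d\sigma^{r_{Y_k}}_{(m-k)n-1}$ and the weight $1/\sqrt{1-\sum_{j\le k}|y^j|^2}$. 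The paper instead avoids any Jacobian computation: it starts from the right-hand side of (\ref{CoV}), rescales the inner sphere integral to the unit sphere, writes that integral as an integral over the ball $B^{(m-k)n-1}$ of the sum of the values on the upper and lower hemispheres (the graph parametrization, quoted from \cite[Appendix D.5]{GClassical}), and then collapses the iterated integral over $B^{kn}\times r_{Y_k}B^{(m-k)n-1}$ into a single integral over $B^{mn-1}$ using the algebraic identity $\sqrt{1-|W'|^2}\sqrt{1-|Y|^2}=\sqrt{1-|Y|^2-|Z'|^2}$, recognizing the result as the graph-parametrization formula for $\mathbb S^{mn-1}$ itself. Your approach is more self-contained (it does not lean on the quoted formula, and indeed reproves it as the case where the second factor is zero-dimensional in spirit) at the cost of a differential-geometric computation; the paper's is purely algebraic modulo the cited reference. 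Your closing sanity check with $F\equiv1$ and the Beta-function identity is a reasonable confirmation that no power of $r_{Y_k}$ has been misplaced.
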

 
\begin{proof}[Proof of Theorem \ref{TM1}]
 To avoid technicalities arising from interpolating sublinear operators, we consider the following linerization of the maximal operator. For a measurable function $\tau: \R^n \to [0,\infty)$ we define
 \begin{equation*}
 T(f_1,\dots,f_m) (x) := \int_{\mathbb S^{mn-1}}  \prod_{j=1}^{m} f_j(x-\tau(x) y^j) d\s_{mn-1}(y^1,\dots,y^m).
 \end{equation*}
Since the boundedness of $T$ on some spaces implies the boundedness for $\M$ on the same spaces, it is enough to show 
\begin{equation*}
 \|T(f_1,\dots,f_m)\|_{L^p}\leq C \prod_{j=1}^m \|f_j\|_{L^{p_j}},
\end{equation*}
for some $C$ independent of $\tau$. Since $|T(f_1,\dots,f_m)| \leq \M(f_1,\dots,f_m)$, applying Lemma~\ref{L1} with $k=m-1$ yields the following $m$ pointwise estimates: 
\begin{equation} \label{PW} T (f_1,\dots,f_m) (x) \lesssim \prod_{j\neq k} Mf_j(x)\cdot Sf_k(x),\quad k=1,\dots ,m,
\end{equation}
where $Mf_j$ is the Hardy-Littlewood maximal function of $f_j$ and $Sf_k$ is the linear spherical maximal function of $f_k$. For $M$ it is well known that $\|Mf\|_{L^{p}} \lesssim \|f\|_{L^p}$ for $p>1$ and $\|Mf\|_{L^{1}} \lesssim \|f\|_{L^{1,\infty}}$. Also, for $n\geq 2$, $\|Sf\|_{L^{p}} \lesssim \|f\|_{L^p}$ if and only if $p>\frac{n}{n-1}$. Therefore, by H\"older's inequality we obtain the following $m$ estimates  
\begin{eqnarray*} 
\|T(f_1,\dots,f_m)\|_{L^p} &\leq&  \prod_{j\neq k} \|Mf_j\|_{L^{p_j}}\cdot \|Sf_k\|_{L^{p_k}}, \quad k=1,\dots,m\\
&\lesssim& \prod_{j=1}^m \|f_j\|_{L^{p_j}},
\end{eqnarray*}
when $p_k>\frac{n}{n-1}$, $1<p_1,\dots ,p_m \leq \infty$ and $\sum\limits_{j=1}^m \frac{1}{p_j}=\frac{1}{p}$. Thus, applying (complex) interpolation between these estimates and the trivial $L^\infty\times\cdots\times L^\infty\to L^\infty$ bound, we obtain the boundedness in the largest possible open set of exponents, as well as the endpoint estimates $(a)$, $(b)$ and $(c)$ in the statement of the theorem (see \cite{GLLZ2012} and \cite[Theorem 7.2.2]{GModern} for the interpolation result we used for $(c)$). 

For the estimates in $(d)-(f)$, we will use Bourgain's restricted weak type endpoint bound for the linear spherical maximal function, which only holds for $n\geq 3$ (for $n=2$ Seeger, Tao, and Wright \cite{STW2003} showed that the restricted weak type inequality fails in the linear case). So for $n\geq 3$ we have the following $m$ estimates:
\begin{eqnarray*} \|T(f_1,\dots,f_m)\|_{L^{\frac{n}{mn-1},\infty}} \leq  \prod_{j\neq k} \|Mf_j\|_{L^{1,\infty}}\cdot \|Sf_k\|_{L^{\frac{n}{n-1},\infty}}\lesssim \prod_{j\neq k} \|f_j\|_{L^{1}}\|f_k\|_{L^{\frac{n}{n-1},1}}.
\end{eqnarray*}	
Interpolating these estimates with the estimates in $(c)$, we conclude $(d)$. 

Moreover, trivially $\|T(\chi_{F_1},\dots,\chi_{F_m})\|_{L^{\frac{n}{mn-1},\infty}} \lesssim \prod_{j=1}^m|F_j|^{\frac{1}{p_j}}$ for any measurable sets $F_1,\dots,F_m$ and for all  $1\leq p_1,\dots,p_m\leq \frac{n}{n-1}$ such that $\sum_{j=1}^m \frac{1}{p_j} = \frac{mn-1}{m}$. Since $L^{\frac{n}{mn-1},\infty}(\R^n)$ is $\frac{n}{mn-1}$ - convex, the estimates imply 
\begin{eqnarray*} \|T(f_1,\dots,f_m)\|_{L^{\frac{n}{mn-1},\infty}} \lesssim \prod_{j=1}^m \|f_j\|_{L^{p_j,\frac{n}{mn-1}}}.
\end{eqnarray*}	
Thus, using multilinear interpolation we conclude $(e)$ and $(f)$ (see \cite[Lemma 2.1 and Proposition 2.2]{BOS2009} for the multilinear interpolation result used here).\end{proof}

\section*{Counterexamples}

\begin{proof}[Proof of Proposition \ref{P1}] 
We consider the functions $f_j(y) = |y|^{-\frac{n}{p_j}} \log\left(\frac{1}{|y|}\right)^{-\frac{m}{p_j}}\chi{_{_{|y|\leq \nu_j}}}$, where $\nu_j= e^{-m/n}/100$ when $j\leq m-1$ and $\nu_m = e^{-m/n}/2$. Then $f_j\in L^{p_j}(\R^n)$. Since the mapping $(y^1,\dots,y^m)\mapsto (Ay^1,\dots,Ay^m)$, with $A\in SO_n$ is an isometry on $\mathbb S^{mn-1}$, we see that we can estimate $\M(f_1,\dots,f_m)$ from below by $M_{\sqrt{m}R}(f_1,\dots,f_m)(Re_1)$, for some $R$ large, where $e_1 = (1,0,\dots,0)\in \R^n$ and
\begin{equation*}
\M_s (f_1,\dots,f_m) (x) := \int_{\mathbb S^{mn-1}}  \prod_{j=1}^{m} f_j(x-s y^j) d\s_{mn-1}(y^1,\dots,y^m).
\end{equation*}
 Let $Y=(y^1,\dots, y^{m-1})\in \R^{(m-1)n}$, $y^m=:z =(z',z_n)$, with $z' =(y^m_1,\dots,y^{m}_{n-1})\in\R^{n-1}$ and $z_n = \sqrt{1 - \sum_{j=1}^{m-1} |y^j|^2 - |z'|^2}$. Moreover, we let $E=(e_1,\dots,e_1)\in \R^{(m-1)n}$. Applying Lemma~\ref{L1} with $k=m-1$, we have that 
\begin{align*} M_{\sqrt{m}R}(f_1,\dots,f_m)&(Re_1)= \\
& = \int_{B^{(m-1)n}} \prod_{j=1}^{m-1} f_j(Re_1 - \sqrt{m}R y^j) \int_{\ry \mathbb S^{n-1}} f_m(Re_1 - \sqrt{m}R z) d\s^{\ry}_{n-1}(z) \dfrac{dY}{\sqrt{1 - |Y|^2} }.\\
\end{align*}
First we focus on the inner integral, namely
\begin{align*} I =\int_{D} |Re_1 - \sqrt{m}Rz|^{-\frac{n}{p_m}}\left(-\log\, |Re_1 - \sqrt{m}Rz|\right)^{-\frac{m}{p_m}} d\s^{\ry}_{n-1}(z),
\end{align*}
where $D = \ry \mathbb S^{n-1} \cap B^n(\frac{e_1}{\sqrt{m}}, \frac{\nu_m}{\sqrt{m}R})$. For $z_0 \in \ry \mathbb S^{n-1} \cap \partial B^n(\frac{e_1}{\sqrt{m}}, \frac{\nu_m}{\sqrt{m}R})$, we let $\theta$ be the angle between the vectors $z_0$ and $e_1$, which is the largest one between $z\in D$ and $e_1$. Then $\theta$ is small since $R$ is large and $|D|\sim (\sqrt{1-|Y|^2}\theta)^{n-1} \sim \theta^{n-1}$. Using the fact that for $\theta$ small, $\theta^2\sim \sin^2\theta = 1-\cos^2\theta \sim 1-\cos\theta$, and the law of cosines
\[\frac{1}{4mR^2}=1-|Y|^2 + \frac{1}{m} - 2 \sqrt{1-|Y|^2}\frac{1}{\sqrt{m}}\cos\theta,\]
we obtain that $\theta^2 \sim \frac{1}{4mR^2} - \left(\sqrt{1-|Y|^2} - \frac{1}{\sqrt{m}}\right)^2$. In turn, since $\sqrt{1-|Y|^2}> \frac{1}{2}$ when $R>2(m-1)$, we have that $ \left| \sqrt{1-|Y|^2} - \frac{1}{\sqrt{m}}\right| \leq \frac{4(m-1)}{100\sqrt{m} R}$ 
and thus we conclude that $\theta \geq C/R$. The same calculation also yields that $\left| \sqrt{1-|Y|^2} - \frac{1}{\sqrt{m}}\right|\lesssim \left|\frac{E}{\sqrt{m}}- Y\right|$, that will be used later in the proof. Hence, we can bound $I$ from below by 
\[\int_0^\theta \int_{\ry \sin\alpha \mathbb S^{n-2}} |Re_1 - \sqrt{m}Rz|^{-\frac{n}{p_m}}\left(-\log\, |Re_1 - \sqrt{m}Rz|\right)^{-\frac{m}{p_m}} d\s^{\ry\sin\alpha}_{n-2} d\alpha,\] 
where $|z| = \ry = \sqrt{1-|Y|^2} \approx \frac{1}{\sqrt{m}}$ and $z=\ry\cos\alpha$. By symmetry, it suffices to consider the case $\ry<\frac{1}{\sqrt{m}}$. Let $\beta$ be the angle such that $|\frac{e_1}{\sqrt{m}} - z| = 2 |\ry - \frac{1}{\sqrt{m}}|$. Then $\beta\sim |\ry - \frac{1}{\sqrt{m}}|$. On the other hand, when $\alpha = 0$, we have $|\frac{e_1}{\sqrt{m}} - z| = |\ry - \frac{1}{\sqrt{m}}|$. Therefore, for all $\alpha\in [0,\beta]$ we have that 
\[|z-\frac{e_1}{\sqrt{m}}| \sim |\ry - \frac{1}{\sqrt{m}}| \lesssim |Y-\frac{E}{\sqrt{m}}, |\]
as was noted above. Thus, using Lemma~\ref{L2}, we obtain
\begin{align*} I&\geq C \int_0^\theta \int_{\ry \sin\alpha\mathbb S^{n-2}} \frac{|Re_1 - \sqrt{m}Rz|^{1-n}}{|Re_1 - \sqrt{m}Rz|^{\frac{n}{p_m}-n+1}\left(-\log\, |Re_1 - \sqrt{m}Rz|\right)^{\frac{m}{p_m}}} d\s_{n-2}^{\ry \sin\alpha} (z) d\alpha\\
&\geq CR^{1-n} |RE - \sqrt{m}RY|^{-\frac{n}{p_m}+n-1}\left(-\log\, |RE - \sqrt{m}RY|\right)^{-\frac{m}{p_m}} |\sqrt{m}\ry -1|^{1-n} \int_0^{C(1-\sqrt{m}\ry )} \sin^{n-2}\alpha\, d\alpha\\
&\geq CR^{1-n} |RE - \sqrt{m}RY|^{-\frac{n}{p_m}+n-1}\left(-\log\, |RE - \sqrt{m}RY|\right)^{-\frac{m}{p_m}}.
\end{align*}

Also, for any $0\leq j\leq m-1$, we have the trivial bound \(|Re_1 - \sqrt{m}Ry^j| \leq \left(\sum_{j=1}^{m-1} |Re_1 - \sqrt{m}Ry^j|^2 \right)^{1/2} = |RE - \sqrt{m}RY|.\) Thus using Lemma~\ref{L2} again, we see that
\begin{align*} &M_{\sqrt{m}R}(f_1,\dots,f_m)(Re_1) \\
&\geq C R^{1-n} \int_{B^{(m-1)n}(\frac{E}{\sqrt{m}},\frac{1}{50\sqrt{m}R}) } |RE - \sqrt{m}RY|^{-\frac{n}{p}+n-1}\left(-\log\, |RE - \sqrt{m}RY|\right)^{-\frac{m}{p}} dY\\
&\geq C R^{1-mn} \int_{B^{(m-1)n}(0,\frac{1}{50})} |w|^{-\frac{n}{p}+n-1}\left(-\log |w|\right)^{-\frac{m}{p}} dw\\
&\geq C R^{1-mn} \int_{0}^\frac{1}{50} r^{-\frac{n}{p}+mn-2}\left(-\log r\right)^{-\frac{m}{p}} dr\\
&=\begin{cases} C R^{1-mn} & \text{if } p = \frac{n}{mn-1}\\
\infty & \text{if } p < \frac{n}{mn-1}. \end{cases}
\end{align*}
We therefore conclude that $\M(f_1,\dots,f_m)$ is not in $L^p$ for any $ p < \frac{n}{mn-1}$ and when $p = \frac{n}{mn-1}$, $\M(f_1,\dots,f_m)(x) \gtrsim |x|^{1-mn}$ for all $|x|$ large enough and thus it is also not in $L^{\frac{n}{mn-1}}(\R^n)$ when $ p = \frac{n}{mn-1}$.
\end{proof}

\begin{proof}[Proof of Proposition \ref{P2}] The $L^\infty\times\cdots\times L^\infty \to L^\infty$ holds trivially and the multilinear maximal function is unbounded from $L^1\times\cdots\times L^1\to L^{\frac{1}{m}}$ by Proposition \ref{P1}. Let $1\leq k<m$. By symmetry, it is enough to show that the strong type estimate (\ref{ST}) fails at the point $(1,\dots,1,0,\dots,0)$. Let $f_1(y) = \dots = f_k(y) = \chi_{B^n(0,1/2)} (|y|)$ and $f_{k+1}(y) = \dots= f_m = 1$. Then, similar to the proof of Proposition~\ref{P1}, we obtain the following pointwise bound
	\begin{align*} \M(f_1,\dots,f_m) (x) & = \M(f_1,\dots,f_m) (Re_1) \geq \M_{\sqrt{m} R} (f_1,\dots,f_m) (Re_1)\\
	& = \int_{\mathbb S^{mn-1}} \prod_{j=1}^k \chi_{B^n(0,1/2)} (|Re_1 - \sqrt{m}R y^j|) d\s_{mn-1}(y^1,\dots,y^m)\\
	&\geq \int_{\mathbb S^{mn-1}} \chi_{B^{kn}(0,1/2)} (|R E_k - \sqrt{m}R Y_k|) d\s_{mn-1}(y^1,\dots,y^m),
	\end{align*}
where $E_k = (e_1,\dots,e_1)$ and $Y_k = (y^1,\dots,y^k)$ are vectors in $\R^{kn}$. Then, applying Lemma~\ref{L1}, we have that
\begin{align*} &\int_{\mathbb S^{mn-1}}  \chi_{B^{kn}(0,1/2)} (|R E_k - \sqrt{m}R Y_k|) d\s_{mn-1}(y^1,\dots,y^m)\\
&= \int_{B^{kn}}  \chi_{B^{kn}(0,1/2)} (|R E_k - \sqrt{m}R Y_k|) \int_{\ryk \mathbb S^{(m-k)n-1}}  d\s^{\ry}_{[(m-k)n-1]}(y^{k+1},\dots,y^m) \dfrac{dY_k}{\sqrt{1 - |Y_k|^2} }\\
&\gtrsim \int_{B^{kn}}  \chi_{B^{kn}(0,1/2)} (|R E_k - \sqrt{m}R Y_k|)dY_k,\\
\end{align*}
since $1 - |Y_k|^2 \geq \frac{1}{2m}$ when $R$ is large enough. Therefore
		\begin{align*} \M(f_1,\dots,f_m) (x) & = \M(f_1,\dots,f_m) (Re_1) \geq \M_{\sqrt{m} R}(f_1,\dots,f_m) (Re_1) \gtrsim R^{-kn},	\end{align*}
and therefore $\M$ does not map $L^1\times\cdots\times L^1 \times L^\infty \times \cdots \times L^\infty\to L^{\frac{1}{k}}$.
\end{proof}

\section*{Appendix}

\begin{proof}[Proof of  Lemma~\ref{L1}] For $y^j=(y^j_1,\dots,y^j_n)\in\R^n$, $j=1,\dots,m$, we set $Y=(y^1,\dots, y^{k})\in \R^{kn}$, $(y^{k+1},\dots, y^m)=:Z =(Z',z_n)$, with $Z' = (y^{k+1},\dots,y^{m-1},y^m_1,\dots, y^{m}_{n-1})\in\R^{(m-k)n-1}$ and $z_n = \sqrt{1 - \sum_{j=1}^{k} |y^j|^2 - |Z'|^2}$. For the sake of clarity in notation, we write $\ry = \sqrt{1-\sum_{j=1}^k |y^j|^2}$, instead of $\ryk$.
	
	Setting $Z/\ry=W=(W',w_n)$, we express the right hand side of (\ref{CoV}) as
	\begin{align*} &\int_{B^{kn}} \int_{\ry \mathbb S^{(m-k)n-1}} F(Y, Z) d\s^{\ry}_{[(m-k)n-1]}(Z) \dfrac{dY}{\sqrt{1 - |Y|^2} } \\
	=&\int_{B^{kn}} \ry^{(m-k)n-1} \int_{\mathbb S^{(m-k)n-1}} F(Y, \ry W) d\s_{[(m-k)n-1]}(W) \dfrac{dY}{\sqrt{1 - |Y|^2} }\\
	=&\int_{B^{kn}} \ry^{(m-k)n-1} \int_{B^{(m-k)n-1}} [F(Y, \ry W',\ry w_n)+F(Y, \ry W',-\ry w_n)] \frac{dW'}{\sqrt{1-|W'|^2} }\dfrac{dY}{\sqrt{1 - |Y|^2} }\\
	=&\int_{B^{kn}} \int_{\ry B^{(m-k)n-1}} [F(Y, Z',z_n)+F(Y, Z',-z_n)] \frac{dZ'}{\sqrt{1-|W'|^2} }\dfrac{dY}{\sqrt{1 - |Y|^2} }\\
	=&\int_{B^{kn}} \int_{\ry B^{(m-k)n-1}} [F(Y, Z',z_n)+F(Y, Z',-z_n)] \dfrac{dZ'dY}{\sqrt{1 - |Y|^2 - |Z'|^2} },\\
	\end{align*}
	as one can verify that $\sqrt{1-|W'|^2}\sqrt{1-|Y|^2} = \sqrt{1 - |Y|^2 - |Z'|^2}$. Using that $B^{mn-1}$ is equal to the disjoint union of the sets $\{(Y,\ry v)\,:\, v\in B^{(m-k)n-1}\}$, we see that the last integral is equal to 
	\begin{equation*} \int_{B^{mn-1}}[ F(Y,Z',z_n) + F(Y,Z',-z_n)] \frac{dYdZ'}{\sqrt{1-|Y|^2 - |Z'|^2}},
	\end{equation*}
	which, in turn, is equal to $\int_{\mathbb S^{mn-1}} F(Y,Z) d\s_{mn-1}(Y,Z)$ (see \cite[Appendix D.5]{GClassical}).
\end{proof}
\begin{lemma} \label{L2} Let $r_1,r_2>0$, $t,s< e^{-\frac{r_2}{r_1}}$ and $t\leq C s$ for some $C\geq 1$. Then there exists an absolute constant $C'$ (depending only on $C,$ $r_1,$ $r_2$) such that
	\begin{equation}\label{l2} s^{-r_1}\left(\log \frac{1}{s}\right)^{-r_2}\leq C' t^{-r_1}\left(\log \frac{1}{t}\right)^{-r_2}.
	\end{equation}
	\begin{proof} Define $F(x) = x^{r_1}\left(\log x\right)^{-r_2}$. Differentiating $F$, we see that $F$ is increasing when $x> e^{\frac{r_2}{r_1}}$ and so for $s< e^{-\frac{r_2}{r_1}}$,
		\begin{equation*} 
		F\left(\frac{1}{s}\right) =  s^{-r_1}\left(\log \frac{1}{s}\right)^{-r_2}\leq C^{r_1}(Cs)^{-r_1}\left(\log \frac{1}{Cs}\right)^{-r_2}= C^{r_1} F\left(\frac{1}{Cs}\right) \leq C' F\left(\frac{1}{t}\right) = C'  t^{-r_1}\left(\log \frac{1}{t}\right)^{-r_2}.
		\end{equation*}
	\end{proof}
\end{lemma}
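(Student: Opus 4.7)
My plan is to introduce the auxiliary function $F(x) = x^{r_1}(\log x)^{-r_2}$ on $(1, \infty)$ and observe that the claimed inequality is exactly $F(1/s) \leq C' F(1/t)$. The whole argument then reduces to a monotonicity analysis of $F$.

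First, I would differentiate: $F'(x) = x^{r_1-1}(\log x)^{-r_2-1}(r_1 \log x - r_2)$, which is positive precisely when $\log x > r_2/r_1$. Thus $F$ is strictly increasing on $(e^{r_2/r_1}, \infty)$, and the hypothesis $s, t < e^{-r_2/r_1}$ places both reciprocals $1/s$ and $1/t$ in this monotonic region.

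Next, I would split on the relative size of $s$ and $t$. If $t \leq s$, monotonicity gives $F(1/s) \leq F(1/t)$ immediately, since $1/t \geq 1/s \geq e^{r_2/r_1}$. If instead $s < t \leq Cs$, I would compute the ratio
\[
\frac{F(1/s)}{F(1/t)} = \left(\frac{t}{s}\right)^{r_1} \left(\frac{\log(1/t)}{\log(1/s)}\right)^{r_2},
\]
bound the first factor by $C^{r_1}$ using $t \leq Cs$, and bound the second factor by $1$ using $s < t$ (which forces $0 < \log(1/t) < \log(1/s)$). Both cases then yield the inequality with $C' = C^{r_1}$.

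The argument is genuinely short; the only point requiring care is verifying that both $1/s$ and $1/t$ lie in the increasing regime of $F$, which is precisely what the quantitative hypothesis $s, t < e^{-r_2/r_1}$ is calibrated to guarantee. I do not anticipate any substantial obstacle.
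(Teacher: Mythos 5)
Your proposal is correct and takes essentially the same approach as the paper: both introduce $F(x) = x^{r_1}(\log x)^{-r_2}$, verify via the derivative that it is increasing on $(e^{r_2/r_1},\infty)$, and deduce $F(1/s)\leq C' F(1/t)$ with $C'=C^{r_1}$. Your case split ($t\leq s$ versus $s<t\leq Cs$) is a minor organizational variant of the paper's single chain of inequalities through $F(1/(Cs))$, and if anything handles the edge cases slightly more transparently.
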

\noindent \textbf{Acknowledgment.} 
I would like to express my sincere gratitude to Professor L. Grafakos for his invaluable support.

\bibliographystyle{amsplain}

\begin{thebibliography}{10}
	\bibitem{AP20191}
	Anderson, T.,   Palsson, E.,
	\newblock \emph{Bounds for discrete multilinear spherical maximal functions in higher dimensions}.
	\newblock (2019), Preprint. arXiv:1911.00464 [math.CA].
	
	\bibitem{AP20192}
	Anderson, T.,   Palsson, E.,
	\newblock \emph{Bounds for discrete multilinear spherical maximal functions}.
	\newblock (2019), Preprint. arXiv:1910.11409 [math.CA]
	
	\bibitem{BOS2009}
	Bak, J.-K., Oberlin, D. M.,   Seeger, A.,
	\newblock \emph{Restriction of Fourier transforms to curves and related oscillatory integrals}.
	\newblock  American Journal of Mathematics {\bf 131}(2) (2009), 277--311.
	
	\bibitem{BGHHO2018}
	Barrionuevo, J., Grafakos, L., He, D., Honz\'ik, P.,   Oliveira, L.,
	\newblock \emph{Bilinear spherical maximal function}.
	\newblock Mathematics Research Letters {\bf 25}(5) (2018), 1369--1388.
		
	\bibitem{B1985}
	Bourgain, J., 
	\newblock \emph{Estimations de certaines fonctions maximales}.
	\newblock Comptes Rendus de l'Acad\'emie des Sciences - Series I - Mathematics. {\bf 301} (1985), 499--502.

	\bibitem{B1986}
	Bourgain, J.,
	\newblock \emph{Averages in the plane over convex curves and maximal operators}.
	\newblock Journal d'Analyse Math\'ematique {\bf 47}(1) (1986), 69--85.

	\bibitem{C1979}
	Calderon, C. P.,
	\newblock \emph{Lacunary spherical means}.
	\newblock Illinois Journal of Mathematics {\bf 23}(3) (1979),  476--484.

	\bibitem{C1985}
	Carbery, A.,
	\newblock \emph{Radial Fourier multipliers and associated maximal functions}.
	\newblock North--Holland Mathematics Studies {\bf 111} (1985), 49--56.
	
	\bibitem{CM1979}
	Cowling, M.,   Mauceri, G.,
	\newblock \emph{On maximal functions}.
	\newblock Milan Journal of Mathematics {\bf 49}(1) (1979), 79--87.
	
	\bibitem{DV1996}
	Duoandikoetxea, J.,   Vega, L.,
	\newblock \emph{Spherical means and weighted inequalities}.
	\newblock Journal of the London Mathematical Society {\bf 53}(2) (1996), 343--353.
	
	\bibitem{GGIPS2013}
	Geba, D., Greenleaf, A., Iosevich, A., Palsson, E.,   Sawyer, E.,
	\newblock \emph{Restricted convolution inequalities, multilinear operators and applications}.
	\newblock Mathematical Research Letters {\bf 20}(4) (2013), 675--694.
	
	\bibitem{GClassical}
	Grafakos, L.,
	\newblock \emph{Classical Fourier Analysis}, Third Edition.
	\newblock Graduate Texts in Mathematics, {\bf 249}, Springer, New York, 2014.
	
	\bibitem{GModern}
	Grafakos, L.,
	\newblock \emph{Modern Fourier Analysis}, Third Edition.
	\newblock Graduate Texts in Mathematics, {\bf 250}, Springer, New York, 2014.
	
	
	\bibitem{GHH2018}
	Grafakos, L., He, D.,   Honz\'ik P.,
	\newblock \emph{Maximal operators associated with bilinear multipliers of limited decay}.
	\newblock Journal d'Analyse Math\'ematique, (2018) accepted for publication.
 
 	\bibitem{GLLZ2012}
 	 Grafakos, L., Liu, L., Lu, S.,   Zhao, F.,
 	\newblock \emph{The multilinear Marcinkiewicz interpolation theorem revisited: The behavior of the constant}.
 	\newblock Journal of Functional Analysis \textbf{ 262}(5), (2012), 2289--2313.
 	
	\bibitem{G1981}
	Greenleaf, A.,
	\newblock \emph{Principal curvature and harmonic-analysis}.
	\newblock Indiana University Mathematics Journal {\bf 30}(4), (1981), 519--537.
	
	\bibitem{HHY2019}
	Heo, Y., Hong, S.,   Yang, C. W.,
	\newblock \emph{Improved bounds for the bilinear spherical maximal operators}.
	\newblock (2019), Preprint. 	
	
	\bibitem{JL2019}
	Jeong, E.,   Lee, S.,
	\newblock \emph{Maximal estimates for the bilinear spherical averages and the bilinear Bochner-Riesz}.
	\newblock (2019), Preprint. arXiv:1903.07980 [math.CA].

	\bibitem{MSW2002}
	Magyar, A., Stein, E.,   Wainger, S.,
	\newblock \emph{Discrete analogues in harmonic analysis: spherical averages}.
	\newblock Annals of Mathematics (2nd Ser.) {\bf 155}(1) (2002), 189--208.
	
	\bibitem{MSS1992}
	Mockenhaupt, G., Seeger, A.,   Sogge, C. D.,
	\newblock \emph{Wave front sets, local smoothing and Bourgain’s circular maximal theorem}.
	\newblock Annals of Mathematics (2nd Ser.) {\bf 136}(1) (1992), 207--218.

	\bibitem{O1988}
	Oberlin, D.,
	\newblock \emph{Multilinear convolutions defined by measures on spheres}.
	\newblock Transactions of the American Mathematical Society {\bf 310} (1988), 821--835.
	
	\bibitem{R1986}
	Rubio de Francia, J. L.,
	\newblock \emph{Maximal functions and Fourier transforms}.
	\newblock Duke Mathematical Journal {\bf 53}(2) (1986), 395--404.	
	
	\bibitem{S1998}
	Schlag, W.,
	\newblock \emph{A geometric proof of the circular maximal theorem}.
	\newblock Duke Mathematical	Journal {\bf 93} (1998), 505--534.	

	\bibitem{STW2003}
	Seeger, A., Tao, T.,   Wright, J.,
	\newblock \emph{Endpoint mapping properties of spherical maximal operators}.
	\newblock Journal of the Institute of Mathematics of Jussieu {\bf 2}(1) (2003), 109--144.
	
	\bibitem{S1976}
	Stein, E. M.,
	\newblock \emph{Maximal functions: spherical means}.
	\newblock Proceedings of the National Academy of Sciences {\bf 73}(7) (1976), 2174--2175.
	
	\bibitem{SS1983}
	Stein, E. M.,   Str\"omberg, J. O.,
	\newblock \emph{Behavior of maximal functions in $R^n$ for large n}.
	\newblock Arkiv f\"or Matematik {\bf 21}(1--2) (1983), 259--269.
	
	



\end{thebibliography}

\Addresses
\end{document}